\theoremstyle{definition}
\newtheorem{definition}{Definition}[section]
\theoremstyle{plain}
\newtheorem{proposition}[definition]{Proposition}
\newtheorem{lemma}[definition]{Lemma}
\newtheorem{theorem}[definition]{Theorem}
\newtheorem{corollary}[definition]{Corollary}
\newtheorem*{theorem*}{Theorem}
\theoremstyle{remark}
\newtheorem{remark}[definition]{Remark}
\newcommand{\tr}{\textrm{tr}}
\newcommand{\compf}{\tilde{f}}
\title{Lie applicable surfaces and curved flats}
\author{Francis Burstall}
\address{(F. Burstall) Department of Mathematical Sciences, University of Bath, Bath. BA2 7AY. United Kingdom}
\email{feb@bath.ac.uk} 
\author{Mason Pember}
\address{(M. Pember) Dipartimento di Matematica, Politecnico di Torino,
Corso Duca degli Abruzzi 24, I-10129 Torino, Italy}
\email{mason.pember@polito.it}
\begin{document}
\maketitle

\begin{abstract}
We investigate curved flats in Lie sphere geometry. We show that in this setting curved flats are in one-to-one correspondence with pairs of Demoulin families of Lie applicable surfaces related by Darboux transformation. 
\end{abstract}

\section{Introduction}
Curved flats, introduced by Ferus-Pedit~\cite{FP1996}, are an integrable system for maps into symmetric spaces. In~\cite{BHPP1997} it is shown how Darboux pairs of isothermic surfaces are equivalent to curved flats in the symmetric space of point pairs in the conformal $3$-sphere. In~\cite{B1968} curved flats in the space of circles (called hypercyclic systems) of $S^{3}$ are studied. It is shown that the orthogonal surfaces to these are Guichard surfaces that are mutually Eisenhart transforms of each other with parameter $\mu = 0$. This was extended to $S^{n}$ in~\cite{BCwip,H1996}. Curved flats in Laguerre geometry were studied in~\cite{MN2000}. These are constructed from the Gauss maps of Bianchi-Darboux pairs of $L$-isothermic surfaces. 

In~\cite{D1911iii,D1911i,D1911ii} Demoulin discovered $\Omega$-surfaces and $\Omega_{0}$-surfaces. Blaschke~\cite{B1929} showed that together these surfaces are the applicable surfaces of Lie sphere geometry. In~\cite{F2000ii,MN2006} it is shown that these surfaces constitute an integrable system and this is given a gauge theoretic interpretation in~\cite{C2012i,P2018}. This gives rise to a Lie-geometric analogue of the Darboux transformation for these surfaces. In this paper we show how curved flats in Lie sphere geometry are equivalent to a pair of Demoulin families of Lie applicable surfaces such that any two members of opposite families form a Darboux pair. 

\textit{Acknowledgements.} 
This work was supported by the Austrian Science Fund (FWF) through the research project P28427-N35 ``Non-rigidity and symmetry breaking", GNSAGA of INdAM and the MIUR grant ``Dipartimenti di Eccellenza'' 2018 - 2022, CUP: E11G18000350001, DISMA, Politecnico di Torino.

\section{Lie sphere geometry}
\label{sec:lsg}

Let $n\in \mathbb{N}$ with $n\ge 2$ and let $\mathbb{R}^{n+2,2}$ denote a $(n+4)$-dimensional vector space equipped with a non-degenerate symmetric bilinear form $(.,.)$ of signature $(n+2,2)$. Let $\mathcal{L}$ denote the lightcone of $\mathbb{R}^{n+2,2}$. In Lie sphere geometry, the projective lightcone $\mathbb{P}(\mathcal{L})$ parametrises oriented spheres of $n$-dimensional space forms so that oriented spheres are in contact if and only if the line joining them lies in $\mathbb{P}(\mathcal{L})$ (see~\cite{C2008}). In this way the set of lines in $\mathbb{P}(\mathcal{L})$, denoted by $\mathcal{Z}$, is identified with the manifold of contact elements. 

It is well known that the exterior algebra $\wedge^{2}\mathbb{R}^{n+2,2}$ is isomorphic to the Lie algebra $\mathfrak{o}(n+2,2)$ of $\textrm{O}(n+2,2)$, i.e., the space of skew-symmetric endomorphisms of $\mathbb{R}^{n+2,2}$, via the isomorphism
\[ a\wedge b\mapsto (a\wedge b),\]
where for any $c\in\mathbb{R}^{n+2,2}$, 
\[ (a\wedge b)c = (a,c)b - (b,c)a.\]
We shall make silent use of this identification throughout this paper. 

Suppose that $\Sigma$ is a $n$-dimensional manifold and consider a smooth map $f:\Sigma\to \mathcal{Z}$. Such a map can be equivalently considered as a rank 2 null subbundle of the trivial bundle $\underline{\mathbb{R}}^{n+2,2}:=\Sigma\times\mathbb{R}^{n+2,2}$. In Lie sphere geometry, one studies hypersurfaces in space forms by considering their contact lifts. These amount to smooth maps $f:\Sigma\to \mathcal{Z}$ satisfying $f^{(1)} \le f^{\perp}$, where $f^{(1)}$ denotes the set of sections of $f$ and derivatives of sections of $f$. We call such an $f$ a \textit{Legendre map}\footnote{Note that this a weaker condition than that of~\cite{C2008}, as we do not require the ``immersion condition''.}.

\subsection{Splitting of the trivial bundle}
Suppose that $\Sigma$ is an $n$-dimensional manifold and that $f,\compf:\Sigma\to\mathcal{Z}$ are complementary smooth maps into $\mathcal{Z}$, that is, we may view them as rank 2 null subbundles of $\underline{\mathbb{R}}^{n+2,2}$ such that $f\cap \compf = \{0\}$. We then have that $f\oplus\compf$ is a rank 4 bundle of $(2,2)$ planes. We may then split the trivial bundle as 
\[ \underline{\mathbb{R}}^{n+2,2} = f\oplus \compf\oplus U,\]
where $U:= (f\oplus \compf)^{\perp}$. The trivial connection on $\underline{\mathbb{R}}^{n+2,2}$ can then be expressed as  
\[ d = D+  \mathcal{N}_{f} +\mathcal{N}_{\compf} + A_{f,\compf} + A_{\compf,f},\]
where $D$ is a metric connection preserving $f$, $\compf$ and $U$, and $\mathcal{N}_{f}\in \Omega^{1}(\compf\wedge U)$, $\mathcal{N}_{\compf}\in \Omega^{1}(f\wedge U)$, $A_{f,\compf}\in \Omega^{1}(\wedge^{2}\compf)$ and $A_{\compf,f}\in \Omega^{1}(\wedge^{2} f)$. 

Let $\sigma\in \Gamma f$. Then
\[ d\sigma = D\sigma + \mathcal{N}_{f}\sigma + A_{f,\compf}\sigma\]
with $D\sigma \in \Omega^{1}(f)$, $\mathcal{N}_{f}\sigma \in \Omega^{1}(U)$ and $A_{f,\compf}\sigma\in \Omega^{1}(\compf)$. Since $f^{\perp} = f\oplus U$, one deduces the following:

\begin{lemma}
\label{lem:legA}
$f$ is a Legendre map if and only if $A_{f,\compf}=0$. 
\end{lemma}

Since $d$ is the trivial connection, and is thus flat, we obtain the Gauss-Codazzi-Ricci equations for this splitting:
\begin{align}
R^{D}+[\mathcal{N}_{f}\wedge \mathcal{N}_{\compf}]+[A_{f,\compf}\wedge A_{\compf,f}] &=0, \label{eqn:RD}\\
d^{D}\mathcal{N}_{f} + [\mathcal{N}_{\compf}\wedge A_{f,\compf}] &=0,\label{eqn:DNf}\\
d^{D}\mathcal{N}_{\compf}+[\mathcal{N}_{f}\wedge A_{\compf,f}] &=0,\label{eqn:DNhatf}\\
d^{D}A_{f,\compf} + \frac{1}{2}[\mathcal{N}_{f}\wedge \mathcal{N}_{f}] &= 0, \label{eqn:DA}\\
d^{D}A_{\compf,f} + \frac{1}{2}[\mathcal{N}_{\compf}\wedge\mathcal{N}_{\compf}] &= 0 \label{eqn:dhatA}.
\end{align}

\subsection{Ribaucour transforms}

Classically two submanifolds are Ribaucour transforms of each other if they envelope a sphere congruence in such a way that the curvature directions at corresponding points coincide. In~\cite{BH2006} this was given a Lie geometric formulation which we shall recall now. 

Suppose that $f,\hat{f}:\Sigma\to\mathcal{Z}$ are Legendre maps that envelope a common sphere congruence $s_{0}:\Sigma\to \mathbb{P}(\mathcal{L})$, i.e., $f\cap \hat{f} = s_{0}$.  Consider the bundle 
\[ \mathcal{H}_{f,\hat{f}}:= (f+\hat{f})/s_{0}.\] 
This is a rank 2 subbundle of $s_{0}^{\perp}/s_{0}$ inheriting a metric of signature $(1,1)$. One then obtains a well defined orthogonal projection $\pi: s_{0}^{\perp}/s_{0} \to  \mathcal{H}_{f,\hat{f}}$ and using this one defines a metric connection on $\mathcal{H}_{f,\hat{f}}$ 
\begin{equation} 
\label{eqn:nabla}
\nabla^{f,\hat{f}}(\nu + s_{0}) = \pi(d\nu +s_{0})
\end{equation}
where $\nu \in \Gamma(f+\hat{f})$. 

\begin{definition}[\cite{BH2006}]
$f$ and $\hat{f}$ are \textit{Ribaucour transforms} of each other if $\nabla^{f,\hat{f}}$ is flat. We call $s_{0}$ a \textit{Ribaucour sphere congruence} and say that $(f,\hat{f})$ is a \textit{Ribaucour pair}.
\end{definition}

\subsection{Lie applicable surfaces}

\begin{definition}[\cite{P2018}]
\label{def:lieapp}
A Legendre map $f:\Sigma\to\mathcal{Z}$ is \textit{Lie applicable} if there exists $\eta\in \Omega^{1}(f\wedge f^{\perp})$ such that $d\eta = [\eta\wedge \eta]=0$ and the quadratic differential $q$ defined by 
\begin{equation} 
\label{eqn:etaquad}
q(X,Y) = \tr(f\to f: \sigma\mapsto \eta(X)d_{Y}\sigma)
\end{equation}
is non-zero on a dense open set. We call such an $\eta$ a \textit{gauge potential} of $f$. 
\end{definition}

\begin{remark}
Gauge potentials $\eta$ are not unique. Given any $\tau\in \Gamma( \wedge^{2}f)$, one has that $\tilde{\eta} := \eta - d\tau$ is again a closed 1-form with $[\tilde{\eta}\wedge \tilde{\eta}]=0$ and $\tilde{q} = q$. We say that $\tilde{\eta}$ is \textit{gauge equivalent} to $\eta$. Let $[\eta]$ denote the equivalence class of gauge potentials that are gauge equivalent to $\eta$. 
\end{remark}

\begin{remark}
Clearly scales $\lambda \eta$ of gauge potentials $\eta$ by a constant $\lambda\in\mathbb{R}\backslash\{0\}$ are again gauge potentials. Some Legendre maps, such as those with planar curvature lines (see~\cite{MN2006}), can also be multiply Lie applicable in the sense that they admit multiple gauge potentials that are not related by scale or gauge equivalence. To avoid this ambiguity, when we refer to a Lie applicable Legendre map $f$, we shall be referring to the pair $(f,[\eta])$, for a fixed choice of $[\eta]$.
\end{remark}

Let $f$ be a Lie applicable surface with gauge potential $\eta$. Then one obtains a 1-parameter family of flat connections $\{d+t\eta\}_{t\in\mathbb{R}}$ on the trivial bundle. Parallel subbundles of these flat connections give rise to new Lie applicable surfaces in the following way: suppose that $\hat{s}$ is a parallel rank 1 null subbundle of $d+m\eta$ for some $m\in \mathbb{R}\backslash\{0\}$ that lies nowhere in $f$. Let $s_{0}:= f\cap \hat{s}^{\perp}$ and define $\hat{f} := s_{0}\oplus \hat{s}$. 

\begin{definition}
We call $\hat{f}$ an $m$-\textit{Darboux transform} of $f$.
\end{definition}

In~\cite{C2012i,P2018} it is shown that $\hat{f}$ is also a Lie applicable surface and that $f$ is an $m$-Darboux transform of $\hat{f}$. Thus we say that $(f,\hat{f})$ is an \textit{$m$-Darboux pair}. 

Since $\hat{s}$ is parallel subbundle of $d+m\eta$, there exists a section $\hat{\sigma}\in \Gamma \hat{s}$ such that $(d + m\eta)\hat{\sigma} =0$. Thus $d\hat{\sigma} \in \Omega^{1}((f+\hat{f})^{\perp})$. It then follows that the connection defined in~\eqref{eqn:nabla} is flat. Thus $m$-Darboux pairs are Ribaucour pairs.

\section{Curved flats in $G_{2,2}(\mathbb{R}^{n+2,2})$}
Let $G_{2,2}(\mathbb{R}^{n+2,2})$ denote the Grassmannian of 4-dimensional linear subspaces of $\mathbb{R}^{n+2,2}$ with signature $(2,2)$. Given an $n$-dimensional manifold $\Sigma$, any smooth map $W:\Sigma\to G_{2,2}(\mathbb{R}^{n+2,2})$ may be viewed as a rank 4 subbundle of the trivial bundle $\underline{\mathbb{R}}^{n+2,2}$. We then have a splitting
\[ \underline{\mathbb{R}}^{n+2,2} = W\oplus W^{\perp},\]
This yields a splitting of the trivial connection as
\[ d = \mathcal{D}^{W} + \mathcal{N}^{W},\]
where $\mathcal{D}^{W}$ is the sum of the induced connections on $W$ and $W^{\perp}$, and $\mathcal{N}^{W} \in \Omega^{1}(W\wedge W^{\perp})$. 

\begin{definition}
We say that $W:\Sigma\to G_{2,2}(\mathbb{R}^{n+2,2})$ is \textit{regular} if the metric given by\footnote{Note that this metric is a scale of the metric induced by the Killing form on $ G_{2,2}(\mathbb{R}^{n+2,2})$.} $tr(\mathcal{N}^{W}\circ \mathcal{N}^{W}|_{W})$ is non-zero on a dense open set. 
\end{definition}

If $\mathcal{D}^{W}|_{W}$ is flat and $\Sigma$ is simply connected then there are two families of $D^{W}$-parallel null rank 2 subbundles $\{f_{\alpha}\}, \{\hat{f}_{\beta}\}$ of $W$, each parametrised by a circle, i.e., $\alpha, \beta\in \mathbb{R}P^{1}$. In~\cite{BH2006} it is shown that each $f_{\alpha}, \hat{f}_{\beta}$ is a Legendre map and any pair $(f_{\alpha}, \hat{f}_{\beta})$ is a Ribaucour pair. We call $\{f_{\alpha}\}, \{\hat{f}_{\beta}\}$ the \textit{Demoulin families of $W$.} This is a Lie geometric reformulation of the Bianchi Permutability Theorem for Ribaucour transforms~\cite[\S 354]{B1923}.

\begin{remark}
\label{rem:inter}
The intersection points of the Demoulin families $s_{\alpha,\beta} = f_{\alpha}\cap \hat{f}_{\beta}$ are the parallel null rank 1 subbundles of $\mathcal{D}^{W}|_{W}$, while distinct members of the same family are complementary. 
\end{remark}

In this paper we shall be considering the case where in addition to $\mathcal{D}^{W}|_{W}$ being flat, we have that $\mathcal{D}^{W}|_{W^{\perp}}$ is flat as well, i.e., $\mathcal{D}^{W}$ is flat. This brings us to the following notion: 

\begin{definition}[\cite{FP1996}]
A map $W:\Sigma\to G_{2,2}(\mathbb{R}^{n+2,2})$ is called a \textit{curved flat} if $\mathcal{D}^{W}$ is flat. 
\end{definition}

The flatness of the trivial connections yields the equations: 
\begin{align}
R^{\mathcal{D}^{W}} + \frac{1}{2}[\mathcal{N}^{W}\wedge\mathcal{N}^{W}]&=0, \label{eqn:curvRDW}\\
d^{\mathcal{D}^{W}}\mathcal{N}^{W}&=0 \label{eqn:codDW}.
\end{align}
Now suppose that $f,\compf\le W$ are complementary null line subbundles of $W$. Then we can write things in terms of the splitting from Section~\ref{sec:lsg}: 
\begin{align*}
\mathcal{D}^{W} &= D + A_{f,\compf} + A_{\compf,f}\\
\mathcal{N}^{W} &= \mathcal{N}_{f}+ \mathcal{N}_{\compf}.
\end{align*}
From~\eqref{eqn:curvRDW}, we see that $\mathcal{D}^{W}$ is flat if and only if 
\[ [\mathcal{N}^{W}\wedge \mathcal{N}^{W}] = [\mathcal{N}_{f}\wedge \mathcal{N}_{f}] + 2 [\mathcal{N}_{f}\wedge \mathcal{N}_{\compf}] + [\mathcal{N}_{\compf}\wedge \mathcal{N}_{\compf}]=0.\]
Since the components of this sum live in linearly independent subbundles, we have that $\mathcal{D}^{W}$ is flat if and only if 
\[ [\mathcal{N}_{f}\wedge \mathcal{N}_{f}] = [\mathcal{N}_{f}\wedge \mathcal{N}_{\compf}] = [\mathcal{N}_{\compf}\wedge \mathcal{N}_{\compf}]=0.\]

Suppose now that $f$ is a Legendre map. By Lemma~\ref{lem:legA}, we have that $A_{f,\compf}=0$. Then from~\eqref{eqn:DA}, we learn that $[\mathcal{N}_{f}\wedge \mathcal{N}_{f}]=0$. Define $\eta :=- \frac{1}{m}(A_{\compf,f}+\mathcal{N}_{\compf}) \in \Omega^{1}(f\wedge f^{\perp})$, for some $m\in \mathbb{R}\backslash\{0\}$. Then
\begin{align*}
d\eta &= -\frac{1}{m} (D + A_{\compf,f}+\mathcal{N}_{f} + \mathcal{N}_{\compf})(A_{\compf,f}+\mathcal{N}_{\compf})\\
& = -\frac{1}{m}\left(d^{D}A_{\compf,f} + d^{D}\mathcal{N}_{\compf} + [A_{\compf,f}\wedge \mathcal{N}_{\compf}] + [\mathcal{N}_{f}\wedge A_{\compf,f}] + [\mathcal{N}_{f}\wedge \mathcal{N}_{\compf}] + [\mathcal{N}_{\compf}\wedge \mathcal{N}_{\compf}]\right)\\
&= -\frac{1}{m}\left(\frac{1}{2}[\mathcal{N}_{\compf}\wedge\mathcal{N}_{\compf}] + [\mathcal{N}_{f}\wedge \mathcal{N}_{\compf}]\right),
\end{align*}
using~\eqref{eqn:DNhatf} and~\eqref{eqn:dhatA}. It then follows that $d\eta=0$ if and only if 
\[ [\mathcal{N}_{f}\wedge \mathcal{N}_{\compf}] = [\mathcal{N}_{\compf}\wedge \mathcal{N}_{\compf}]=0.\]
On the other hand, we already have that $[\mathcal{N}_{f}\wedge \mathcal{N}_{f}]=0$ since $f$ is a Legendre map. Thus $\eta$ is closed if and only if $\mathcal{D}^{W}$ is flat. We also have that $[\eta\wedge \eta] = - \frac{1}{m^{2}}[\mathcal{N}_{\compf}\wedge \mathcal{N}_{\compf}]$. Therefore if $\eta$ is closed then $[\eta\wedge \eta]=0$. We have thus arrived at the following lemma: 

\begin{lemma}
\label{lem:flatclosed}
Suppose that $W=f\oplus\compf$ and $f$ is a Legendre map. Then $\mathcal{D}^{W}$ is flat if and only if the 1-form $\eta = -\frac{1}{m}(A_{\compf,f} +\mathcal{N}_{\compf})\in \Omega^{1}(f\wedge f^{\perp})$ is closed for any $m\in \mathbb{R}\backslash\{0\}$. Moreover, if $\eta$ is closed then $[\eta\wedge \eta]=0$. 
\end{lemma}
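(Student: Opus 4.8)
The plan is to read off both directions of the equivalence from the Gauss equation \eqref{eqn:curvRDW} together with the structure equations of the refined splitting, using repeatedly that brackets of the various off-diagonal pieces land in linearly independent subbundles. First I would record that, in the splitting $\underline{\mathbb{R}}^{n+2,2} = f\oplus\compf\oplus U$, one has $\mathcal{D}^{W} = D + A_{f,\compf} + A_{\compf,f}$ and $\mathcal{N}^{W} = \mathcal{N}_{f} + \mathcal{N}_{\compf}$, so that by \eqref{eqn:curvRDW} the connection $\mathcal{D}^{W}$ is flat precisely when $[\mathcal{N}^{W}\wedge\mathcal{N}^{W}]=0$. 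Expanding this bracket produces the three terms $[\mathcal{N}_{f}\wedge\mathcal{N}_{f}]$, $2[\mathcal{N}_{f}\wedge\mathcal{N}_{\compf}]$ and $[\mathcal{N}_{\compf}\wedge\mathcal{N}_{\compf}]$; a short type computation, using that $f$ and $\compf$ are null and orthogonal to $U$, shows these lie in $\wedge^{2}\compf$, in $\wedge^{2}U\oplus(f\wedge\compf)$, and in $\wedge^{2}f$ respectively. As these subbundles are linearly independent, flatness of $\mathcal{D}^{W}$ is equivalent to the simultaneous vanishing of all three brackets.

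Next I would feed in the Legendre hypothesis: by Lemma~\ref{lem:legA} it gives $A_{f,\compf}=0$, whence \eqref{eqn:DA} forces $[\mathcal{N}_{f}\wedge\mathcal{N}_{f}]=0$ automatically. Thus the first of the three conditions holds for free, and flatness of $\mathcal{D}^{W}$ reduces to $[\mathcal{N}_{f}\wedge\mathcal{N}_{\compf}]=[\mathcal{N}_{\compf}\wedge\mathcal{N}_{\compf}]=0$.

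The crux is then to compute $d\eta$ for $\eta=-\frac{1}{m}(A_{\compf,f}+\mathcal{N}_{\compf})$ and match. Writing $d = D + \mathcal{N}_{f} + \mathcal{N}_{\compf} + A_{\compf,f}$ (using $A_{f,\compf}=0$) and applying it to $\eta$ produces $d^{D}A_{\compf,f}$, $d^{D}\mathcal{N}_{\compf}$ and a collection of brackets. Here I would invoke \eqref{eqn:dhatA} to replace $d^{D}A_{\compf,f}$ by $-\frac{1}{2}[\mathcal{N}_{\compf}\wedge\mathcal{N}_{\compf}]$ and \eqref{eqn:DNhatf} to replace $d^{D}\mathcal{N}_{\compf}$ by $-[\mathcal{N}_{f}\wedge A_{\compf,f}]$, the latter cancelling against the corresponding bracket in the expansion. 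The remaining brackets involving $A_{\compf,f}$ vanish for type reasons: $[A_{\compf,f}\wedge A_{\compf,f}]$ and $[A_{\compf,f}\wedge\mathcal{N}_{\compf}]$ are brackets of $\wedge^{2}f$ with $\wedge^{2}f$, respectively $f\wedge U$, which are zero since $f$ is null and $U\perp f$. This collapses the expression to
\[ d\eta = -\frac{1}{m}\Bigl(\tfrac{1}{2}[\mathcal{N}_{\compf}\wedge\mathcal{N}_{\compf}] + [\mathcal{N}_{f}\wedge\mathcal{N}_{\compf}]\Bigr), \]
whose two summands again lie in linearly independent subbundles. Hence $d\eta=0$ if and only if $[\mathcal{N}_{f}\wedge\mathcal{N}_{\compf}]=[\mathcal{N}_{\compf}\wedge\mathcal{N}_{\compf}]=0$, which by the previous paragraph is exactly the flatness of $\mathcal{D}^{W}$.

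Finally, for the ``moreover'' clause I would compute $[\eta\wedge\eta]$ directly: expanding $[(A_{\compf,f}+\mathcal{N}_{\compf})\wedge(A_{\compf,f}+\mathcal{N}_{\compf})]$ and discarding the $A_{\compf,f}$-terms by the same type argument leaves $[\eta\wedge\eta]$ equal to a nonzero multiple of $[\mathcal{N}_{\compf}\wedge\mathcal{N}_{\compf}]$; since $\eta$ closed forces $[\mathcal{N}_{\compf}\wedge\mathcal{N}_{\compf}]=0$, we conclude $[\eta\wedge\eta]=0$. I expect the main obstacle to be purely the bookkeeping in these bracket computations, namely verifying which subbundle each bracket occupies and which vanish identically by nullity and orthogonality, rather than any conceptual difficulty, since once the types are pinned down the structure equations do all the remaining work.
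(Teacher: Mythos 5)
Your proposal is correct and is essentially the paper's own argument: the paper likewise reads flatness of $\mathcal{D}^{W}$ off \eqref{eqn:curvRDW} as the vanishing of $[\mathcal{N}_{f}\wedge\mathcal{N}_{f}]$, $[\mathcal{N}_{f}\wedge\mathcal{N}_{\compf}]$ and $[\mathcal{N}_{\compf}\wedge\mathcal{N}_{\compf}]$, uses Lemma~\ref{lem:legA} with \eqref{eqn:DA} to dispose of the first bracket, computes $d\eta$ via \eqref{eqn:DNhatf} and \eqref{eqn:dhatA} to arrive at $-\frac{1}{m}\bigl(\frac{1}{2}[\mathcal{N}_{\compf}\wedge\mathcal{N}_{\compf}]+[\mathcal{N}_{f}\wedge\mathcal{N}_{\compf}]\bigr)$, and obtains $[\eta\wedge\eta]$ as a multiple of $[\mathcal{N}_{\compf}\wedge\mathcal{N}_{\compf}]$. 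Your only addition is to make explicit the type bookkeeping (which brackets land in $\wedge^{2}f$, $\wedge^{2}\compf$, $\wedge^{2}U\oplus(f\wedge\compf)$, and which vanish by nullity of $f$ and $U\perp f$) that the paper uses silently, and your checks there are accurate.
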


We are almost in the position to say that $f$ is a Lie applicable surface, but we have to determine whether the quadratic differential $q$ defined by~\eqref{eqn:etaquad} is non-zero on a dense open set. Now for $\sigma\in \Gamma f$, 
\begin{align*}
 \eta(X)d_{Y}\sigma &= -\frac{1}{m}(A_{\compf,f}(X)+\mathcal{N}_{\compf}(X))(D_{Y}\sigma + \mathcal{N}_{f}(Y)\sigma)\\
 &= -\frac{1}{m}\mathcal{N}_{\compf}(X)\mathcal{N}_{f}(Y)\sigma\\
 &= - \frac{1}{m}\mathcal{N}^{W}(X)\mathcal{N}^{W}(Y)\sigma.
\end{align*}
Therefore, the quadratic differential associated to $\eta$ is given by $q =-\frac{1}{m} \tr(\mathcal{N}^{W}\circ \mathcal{N}^{W}|_{f})$. Moreover, since $\mathcal{N}^{W}$ is skew-symmetric, we can in fact say that $q =-\frac{1}{2m} \tr(\mathcal{N}^{W}\circ \mathcal{N}^{W}|_{W})$. Hence $q$ is non-zero on a dense open set if and only if $W$ is regular. We also have that
\[ d+ m\eta = D+ \mathcal{N}_{f},\]
and thus $\compf$ is a parallel subbundle of $d+m\eta$. 

\begin{proposition}
\label{prop:flatpar}
Suppose that $W$ is a regular curved flat. Then any Legendre map $f\le W$ is Lie applicable and any complementary $\compf\le W$ to $f$ is a parallel subbundle of $d+m\eta$ for some gauge potential $\eta\in \Omega^{1}(f\wedge f^{\perp})$ and $m\in \mathbb{R}\backslash \{0\}$.  
\end{proposition}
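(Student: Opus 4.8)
The plan is to observe that essentially all of the analytic content has already been assembled in the discussion preceding the statement, so the proof reduces to collecting these ingredients for a fixed but arbitrary complementary $\compf$, and then verifying the one remaining parallelism claim by a direct inspection of the connection.

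First I would fix any complementary null rank $2$ subbundle $\compf\le W$ to $f$, so that $W=f\oplus\compf$, and record the induced splitting $\mathcal{D}^{W}=D+A_{f,\compf}+A_{\compf,f}$ and $\mathcal{N}^{W}=\mathcal{N}_{f}+\mathcal{N}_{\compf}$. Since $f$ is a Legendre map, Lemma~\ref{lem:legA} gives $A_{f,\compf}=0$. For a chosen $m\in\mathbb{R}\backslash\{0\}$ I set $\eta:=-\frac{1}{m}(A_{\compf,f}+\mathcal{N}_{\compf})\in\Omega^{1}(f\wedge f^{\perp})$ and apply Lemma~\ref{lem:flatclosed}: because $W$ is a curved flat, $\mathcal{D}^{W}$ is flat, and hence $\eta$ is closed with $[\eta\wedge\eta]=0$.

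Second, to upgrade $\eta$ to a genuine gauge potential in the sense of Definition~\ref{def:lieapp}, I would use the identity $\eta(X)d_{Y}\sigma=-\frac{1}{m}\mathcal{N}^{W}(X)\mathcal{N}^{W}(Y)\sigma$ for $\sigma\in\Gamma f$, which follows once $A_{f,\compf}=0$ kills the $D\sigma$ and $A_{\compf,f}$ contributions. Taking the trace over $f$ identifies the associated quadratic differential as $q=-\frac{1}{2m}\tr(\mathcal{N}^{W}\circ\mathcal{N}^{W}|_{W})$, so that the regularity hypothesis on $W$ is precisely the statement that $q$ is non-zero on a dense open set. This establishes that $f$ is Lie applicable with gauge potential $\eta$.

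Finally, for the parallelism claim I would use that $d+m\eta=D+\mathcal{N}_{f}$ and check that this connection preserves $\compf$: the metric connection $D$ preserves $\compf$ by construction, while $\mathcal{N}_{f}\in\Omega^{1}(\compf\wedge U)$ annihilates $\compf$ since $\compf$ is null and orthogonal to $U$. Hence $\compf$ is a parallel subbundle of $d+m\eta$. I do not anticipate a serious obstacle here, as the two substantive steps, namely the equivalence ``$\mathcal{D}^{W}$ flat $\Leftrightarrow$ $\eta$ closed'' and the evaluation of $q$, are supplied by Lemma~\ref{lem:flatclosed} and the calculation preceding the statement; the only points demanding care are the short bilinear-form verification that $\mathcal{N}_{f}$ annihilates $\compf$ and the correct bookkeeping of the factor $\tfrac{1}{2}$ relating the trace over $f$ to the trace over $W$ via the skew-symmetry of $\mathcal{N}^{W}$.
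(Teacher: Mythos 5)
Your proposal is correct and follows essentially the same route as the paper, whose proof of this proposition is exactly the discussion preceding the statement: fix a complementary $\compf$, use Lemma~\ref{lem:legA} to kill $A_{f,\compf}$, set $\eta=-\frac{1}{m}(A_{\compf,f}+\mathcal{N}_{\compf})$, invoke Lemma~\ref{lem:flatclosed} for closedness and $[\eta\wedge\eta]=0$, identify $q=-\frac{1}{2m}\tr(\mathcal{N}^{W}\circ\mathcal{N}^{W}|_{W})$ so that regularity of $W$ gives the density condition, and observe $d+m\eta=D+\mathcal{N}_{f}$ preserves $\compf$. Your only addition is to spell out explicitly why $\mathcal{N}_{f}\in\Omega^{1}(\compf\wedge U)$ annihilates $\compf$ (since $\compf$ is null and orthogonal to $U$), a detail the paper leaves implicit, and that verification is correct.
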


Conversely, let $f$ be a Lie applicable Legendre map with gauge potential $\eta$ and suppose that $\compf:\Sigma\to \mathcal{Z}$ is a complementary parallel subbundle of the flat connection $d+m\eta$. Let $W:=f\oplus\compf$. Since $f^{\perp} = f\oplus W^{\perp}$, we may write $\eta = \eta_{\wedge^{2}f} + \eta_{f\wedge W^{\perp}}$ with $\eta_{\wedge^{2}f} \in \Omega^{1}(\wedge^{2}f)$ and $\eta_{f\wedge W^{\perp}}\in \Omega^{1}(f\wedge W^{\perp})$. Now let us consider the connection $d+m\eta$: 
\[ d+ m\eta = D +\mathcal{N}_{f}+ (A_{\compf,f} + m\eta_{\wedge^{2}f}) + (\mathcal{N}_{\compf}+ m\eta_{f\wedge W^{\perp}}).\]
One deduces that $\compf$ is a parallel subbundle of $d+m\eta$ if and only if $A_{\compf,f}=- m\eta_{\wedge^{2}f}$ and $\mathcal{N}_{\compf}=- m\eta_{f\wedge W^{\perp}}$. Hence, $\eta = - \frac{1}{m}(A_{\compf,f} + \mathcal{N}_{\compf})$. Now, by Lemma~\ref{lem:flatclosed}, the closure of $\eta$ implies that $\mathcal{D}^{W}$ is flat. 

\begin{proposition}
\label{prop:parflat}
Suppose that $f$ is a Lie applicable Legendre map with gauge potential $\eta$ and suppose that a complementary $\compf$ is $d+m\eta$ parallel for some $m\in \mathbb{R}\backslash\{0\}$. Then $W:= f\oplus \compf$ is a regular curved flat. 
\end{proposition}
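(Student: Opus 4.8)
The plan is to assemble the computations carried out in the paragraph immediately preceding the statement, which already do most of the work. Those computations establish, from the hypothesis that the complementary subbundle $\compf$ is $d+m\eta$-parallel, the decomposition $d+m\eta = D + \mathcal{N}_{f} + (A_{\compf,f} + m\eta_{\wedge^{2}f}) + (\mathcal{N}_{\compf} + m\eta_{f\wedge W^{\perp}})$, and hence the key structural identity $\eta = -\frac{1}{m}(A_{\compf,f} + \mathcal{N}_{\compf})$. First I would record that $W = f\oplus\compf$ is a genuine map into $G_{2,2}(\mathbb{R}^{n+2,2})$: since $f$ and $\compf$ are complementary null rank-$2$ subbundles, their sum is a rank-$4$ bundle of $(2,2)$-planes, exactly as in the splitting of Section~\ref{sec:lsg}.

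Next I would establish flatness. Since $\eta$ is a gauge potential it is closed, so Lemma~\ref{lem:flatclosed}---applied with this particular representative $\eta = -\frac{1}{m}(A_{\compf,f} + \mathcal{N}_{\compf})$---immediately yields that $\mathcal{D}^{W}$ is flat, i.e.\ that $W$ is a curved flat. This is the direct reading of the lemma, whose hypotheses ($W = f\oplus\compf$ with $f$ Legendre, and $\eta$ closed) are precisely what we have in hand.

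It then remains to verify regularity, which I expect to be the only point requiring care. The computation of $\eta(X)d_{Y}\sigma$ for $\sigma\in\Gamma f$ carried out before Proposition~\ref{prop:flatpar} used nothing beyond $A_{f,\compf}=0$ (Lemma~\ref{lem:legA}) and the identity $\eta = -\frac{1}{m}(A_{\compf,f} + \mathcal{N}_{\compf})$, both of which hold here; it therefore applies verbatim and shows that the quadratic differential $q$ associated to $\eta$ equals $-\frac{1}{2m}\tr(\mathcal{N}^{W}\circ\mathcal{N}^{W}|_{W})$. The crucial step is then to invoke the hypothesis that $f$ is Lie applicable: by Definition~\ref{def:lieapp} its $q$ is non-zero on a dense open set, whence $\tr(\mathcal{N}^{W}\circ\mathcal{N}^{W}|_{W})$ is non-zero on a dense open set, which is exactly the regularity of $W$. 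Combining flatness with regularity gives that $W$ is a regular curved flat, completing the argument.

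The potential pitfall, and the thing I would be most careful to flag, is conflating the two directions of the correspondence: one must observe that the formula for $q$ is derived solely from the structural identity for $\eta$ and the Legendre condition, and does \emph{not} presuppose that $W$ is already known to be a curved flat. It is therefore legitimate to run the implication in this converse direction, drawing the non-vanishing of $q$ from the given Lie applicability of $f$ rather than from any prior regularity of $W$.
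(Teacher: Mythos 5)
Your proof is correct and takes essentially the same route as the paper, which proves this proposition precisely via the paragraph you cite: the structural identity $\eta = -\frac{1}{m}(A_{\compf,f}+\mathcal{N}_{\compf})$ deduced from $d+m\eta$-parallelity of $\compf$, flatness of $\mathcal{D}^{W}$ from Lemma~\ref{lem:flatclosed}, and regularity from the formula $q = -\frac{1}{2m}\tr(\mathcal{N}^{W}\circ \mathcal{N}^{W}|_{W})$ combined with the non-degeneracy of $q$ in Definition~\ref{def:lieapp}. Your final remark---that the computation of $q$ uses only the Legendre condition and the structural identity, not any prior flatness or regularity of $W$, and so can legitimately be run in this converse direction---is exactly the observation that justifies the paper's reuse of that computation here.
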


\begin{remark}
Note that any other complementary $\compf'\le W$ can be written as $\compf' = \exp(m\tau) \compf$ for some $\tau\in \Gamma(\wedge^{2}f)$. On the other hand $\exp(m\tau)\compf$ is a parallel subbundle of $d+m\eta'$ where $\eta':= \eta - d\tau$ is gauge equivalent to $\eta$. In particular, we learn that the $m$ in Proposition~\ref{prop:flatpar} is independent of the choice of complementary $\compf$. 
\end{remark}

Since $d+t\eta$ is a flat connection for any $t\in\mathbb{R}$, it admits many parallel null subbundles $\compf$. Proposition~\ref{prop:flatpar} and Proposition~\ref{prop:parflat} then yield:

\begin{corollary}
A Legendre map $f$ is Lie applicable if and only if there exists a regular curved flat $W:\Sigma\to G_{2,2}(\mathbb{R}^{n+2,2})$ with $W^{\perp}\le f^{\perp}$. 
\end{corollary}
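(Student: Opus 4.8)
The plan is to read this corollary as the amalgamation of Proposition~\ref{prop:flatpar} and Proposition~\ref{prop:parflat}, the only genuinely new ingredient being a translation between the two incidence conditions $f\le W$ and $W^{\perp}\le f^{\perp}$. I would first record that, since $\mathbb{R}^{n+2,2}$ carries a nondegenerate form, one has $(V^{\perp})^{\perp}=V$ for every subbundle $V$, so that $W^{\perp}\le f^{\perp}$ holds if and only if $f=(f^{\perp})^{\perp}\le (W^{\perp})^{\perp}=W$. Thus the stated condition ``there exists a regular curved flat $W$ with $W^{\perp}\le f^{\perp}$'' is exactly ``$f$ is a Legendre subbundle of some regular curved flat $W$'', and the corollary becomes a clean equivalence between the two propositions.

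For the direction assuming the curved flat, suppose $W$ is a regular curved flat with $W^{\perp}\le f^{\perp}$. By the preceding remark $f\le W$, and $f$ is a Legendre map by hypothesis, so Proposition~\ref{prop:flatpar} applies verbatim and yields that $f$ is Lie applicable. Note that the regularity of $W$ is precisely what supplies the condition that the quadratic differential $q$ be nonzero on a dense open set, via the identity $q=-\frac{1}{2m}\tr(\mathcal{N}^{W}\circ\mathcal{N}^{W}|_{W})$ already established. This direction therefore requires no further computation.

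For the converse, suppose $f$ is a Lie applicable Legendre map with gauge potential $\eta$, and fix $m\in\mathbb{R}\setminus\{0\}$. The connection $d+m\eta$ is flat, so I would produce a rank $2$ null subbundle $\compf:\Sigma\to\mathcal{Z}$ that is parallel for $d+m\eta$ and complementary to $f$. Proposition~\ref{prop:parflat} then immediately gives that $W:=f\oplus\compf$ is a regular curved flat, and since $f\le W$ the first remark yields $W^{\perp}\le f^{\perp}$, completing the equivalence.

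The main obstacle is the existence of such a complementary parallel null $\compf$ in the converse direction. Working on a simply connected $\Sigma$ (or locally), flatness of $d+m\eta$ trivialises the bundle, so parallel null rank $2$ subbundles correspond to fixed null rank $2$ subspaces of the model fibre $\mathbb{R}^{n+2,2}$; the task is to select one meeting $f$ in $\{0\}$ at every point and pairing nondegenerately with $f$, so that $f\oplus\compf$ has signature $(2,2)$ and $\compf$ indeed lands in $\mathcal{Z}$. Since $f$ is a null rank $2$ subbundle, a generic complementary null $2$-plane in the trivialisation will satisfy both conditions, and this is exactly the abundance of parallel null subbundles noted before the statement. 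I would make this genericity argument explicit, checking that $\compf$ avoids $f$ and yields the correct signature at least on a dense open set, which is all that the regularity of $W$ demands.
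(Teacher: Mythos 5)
Your proposal is correct and takes essentially the same route as the paper, whose entire proof of this corollary consists of the observation that the flat connection $d+t\eta$ admits many parallel null subbundles $\compf$ together with an appeal to Propositions~\ref{prop:flatpar} and~\ref{prop:parflat}; your explicit dualisation $W^{\perp}\le f^{\perp}\Leftrightarrow f\le W$ and your sketch of the existence of a complementary parallel $\compf$ merely spell out what the paper leaves implicit. One simplification: the signature check in your final paragraph is automatic, since $f^{\perp}/f$ is positive definite, so every null vector orthogonal to $f$ already lies in $f$, and hence any parallel null $2$-plane $\compf$ with $\compf\cap f=\{0\}$ pairs nondegenerately with $f$, forcing $f\oplus\compf$ to have signature $(2,2)$ with no genericity argument needed beyond complementarity.
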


We now seek to understand the geometry of this situation. Recall that $W$ such that $\mathcal{D}^{W}|_{W}$ is flat contain two families of Legendre maps $\{f_{\alpha}\}$, $\{\hat{f}_{\beta}\}$, with $\alpha,\beta\in \mathbb{R}P^{1}$. We shall now see how the additional condition that $\mathcal{D}^{W}|_{W^{\perp}}$ is flat affects these Demoulin families. 

\begin{theorem}
\label{thm:curvflat}
Suppose that $W:\Sigma\to G_{2,2}(\mathbb{R}^{n+2,2})$ is regular with $\mathcal{D}^{W}|_{W}$ flat. Then $W$ is a curved flat if and only if the Demoulin families of $W$ consist of Lie applicable Legendre maps $f_{\alpha},\hat{f}_{\beta}$ such that for some $m\in\mathbb{R}\backslash\{0\}$, $(f_{\alpha},\hat{f}_{\beta})$ are $m$-Darboux pairs for all $\alpha,\beta\in \mathbb{R}P^{1}$.
\end{theorem}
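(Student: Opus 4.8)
The plan is to work in the splitting $\underline{\mathbb{R}}^{n+2,2}=f\oplus\compf\oplus U$ determined by two complementary members of one Demoulin family, and to shuttle between the curved-flat data and the gauge-theoretic data via Propositions~\ref{prop:flatpar} and~\ref{prop:parflat}. For the forward implication, assume $W$ is a regular curved flat. Then Proposition~\ref{prop:flatpar} already tells me that every Legendre map in $W$ is Lie applicable; in particular each $f_\alpha$ and each $\hat{f}_\beta$ is Lie applicable, which disposes of half of the statement. To upgrade the Ribaucour pairs $(f_\alpha,\hat{f}_\beta)$ to $m$-Darboux pairs I fix $f=f_{\alpha_0}$ and a distinct same-family member $\compf=f_{\alpha_1}$; by Remark~\ref{rem:inter} these are complementary, so $W=f\oplus\compf$, and Proposition~\ref{prop:flatpar} yields a gauge potential $\eta$ with $d+m\eta=D+\mathcal{N}_{f}$ for which $\compf$ is parallel.

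Next I would exploit the intersection lines. For each $\beta$ the line $s_{\alpha_1,\beta}=f_{\alpha_1}\cap\hat{f}_\beta$ is, by Remark~\ref{rem:inter}, a $\mathcal{D}^{W}$-parallel null line lying inside $f_{\alpha_1}$. Since both families are Legendre, Lemma~\ref{lem:legA} gives $A_{f,\compf}=A_{\compf,f}=0$, so $\mathcal{D}^{W}=D$ on $W$; moreover $\mathcal{N}_{f}$ annihilates $\compf$, whence $d+m\eta=D+\mathcal{N}_{f}$ restricts to $D$ on $\compf$. Therefore $s_{\alpha_1,\beta}$ is $d+m\eta$-parallel, it lies nowhere in $f_{\alpha_0}$, and a short orthogonality computation gives $f_{\alpha_0}\cap s_{\alpha_1,\beta}^{\perp}=s_{\alpha_0,\beta}$, so that $\hat{f}_\beta=s_{\alpha_0,\beta}\oplus s_{\alpha_1,\beta}$ is exactly the $m$-Darboux transform of $f_{\alpha_0}$ built from $s_{\alpha_1,\beta}$. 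Because $\eta$ and $m$ are fixed independently of $\beta$, all pairs $(f_{\alpha_0},\hat{f}_\beta)$ are $m$-Darboux; the symmetry of the $m$-Darboux relation, together with the freedom to absorb $m$ into the scale of each gauge potential, then lets me arrange a single $m$ serving every pair $(f_\alpha,\hat{f}_\beta)$.

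For the converse I again take $f=f_{\alpha_0}$ and the null complement $\compf=f_{\alpha_1}$. Both being Legendre, $A_{f,\compf}=A_{\compf,f}=0$, and then~\eqref{eqn:DA} and~\eqref{eqn:dhatA} force $[\mathcal{N}_{f}\wedge\mathcal{N}_{f}]=[\mathcal{N}_{\compf}\wedge\mathcal{N}_{\compf}]=0$; so by Lemma~\ref{lem:flatclosed} the only thing left is $[\mathcal{N}_{f}\wedge\mathcal{N}_{\compf}]=0$, equivalently that $\eta=-\tfrac{1}{m}\mathcal{N}_{\compf}$ is closed, equivalently that $\mathcal{D}^{W}$ is flat. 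I would obtain this from Proposition~\ref{prop:parflat} by showing that $f_{\alpha_1}$ is itself a $d+m\eta$-parallel null complement, where $\eta$ is the gauge potential furnished by the hypothesis that $f_{\alpha_0}$ is Lie applicable. Each $m$-Darboux pair $(f_{\alpha_0},\hat{f}_\beta)$ supplies a $d+m\eta$-parallel null line $\hat{s}_\beta\le\hat{f}_\beta$; since the lines $s_{\alpha_1,\beta}=f_{\alpha_1}\cap\hat{f}_\beta$ span $f_{\alpha_1}$ as $\beta$ ranges over $\mathbb{R}P^{1}$ (distinct same-family members being complementary), it suffices to identify $\hat{s}_\beta$ with $s_{\alpha_1,\beta}$ and conclude that $f_{\alpha_1}$, being spanned by parallel lines, is parallel. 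Proposition~\ref{prop:parflat} then gives that $W=f_{\alpha_0}\oplus f_{\alpha_1}$ is a regular curved flat.

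The hard part will be this last identification in the converse. The Darboux line $\hat{s}_\beta$ is determined only modulo the gauge freedom $\eta\mapsto\eta-d\tau$, $\tau\in\Gamma(\wedge^{2}f_{\alpha_0})$, which shifts it by the fibre of $s_{\alpha_0,\beta}=f_{\alpha_0}\cap\hat{f}_\beta$; hence $\hat{s}_\beta$ and $s_{\alpha_1,\beta}$ agree only modulo $s_{\alpha_0,\beta}$, and I must show that a single global representative $\eta$ can be chosen so that $\hat{s}_\beta=s_{\alpha_1,\beta}$ for every $\beta$ simultaneously, i.e. that the $f_{\alpha_0}$-components of all the parallel lines can be killed at once by the one function parameter in $\Gamma(\wedge^{2}f_{\alpha_0})$. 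I expect to settle this by expanding the parallel condition for $\hat{s}_\beta$ in the splitting, reading off its $U$- and $f_{\alpha_0}$-components, and using that the $\mathbb{R}P^{1}$-family of such conditions over-determines and thereby rigidifies this single parameter. A secondary bookkeeping point, present in both directions, is that $m$ is tied to the normalisation of each gauge potential, so I will fix $m$ once and invoke the symmetry of the $m$-Darboux relation and the rescaling freedom of $[\eta]$ to make that single value serve all pairs drawn from the two families.
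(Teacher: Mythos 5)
Your forward implication is sound and is essentially the paper's own argument: fix two members $f_{\alpha_0},f_{\alpha_1}$ of one family, use Proposition~\ref{prop:flatpar} to obtain $\eta=-\frac{1}{m}\mathcal{N}_{\compf}$ with $d+m\eta=D+\mathcal{N}_{f}$, and observe that the $D$-parallel lines $s_{\alpha_1,\beta}$ are $d+m\eta$-parallel because $\mathcal{N}_{f}$ annihilates $\compf$; the uniform $m$ is automatic since $m$ is a free normalisation there. No objection to that half.

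The converse, however, has a genuine gap, sitting exactly where you flagged it. Your route stands or falls on choosing one representative $\eta$ of the given gauge class so that the $d+m\eta$-parallel lines $\hat{s}_{\beta}\le\hat{f}_{\beta}$ coincide with $s_{\alpha_1,\beta}$ simultaneously, and you only say you ``expect to settle'' this. The mechanism you offer --- that the $\mathbb{R}P^{1}$-family of conditions ``over-determines and thereby rigidifies'' the gauge parameter --- is backwards: $\wedge^{2}f_{\alpha_0}$ is a line bundle, so the gauge freedom is a single scalar function $\tau$, each $\beta$ imposes (generically) one affine condition on it at each point, and already two distinct values of $\beta$ (which is all you need to span $f_{\alpha_1}$) give two a priori independent conditions on that one scalar. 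You must prove these conditions are mutually consistent, and that consistency is, as far as I can see, only available a posteriori: once $W$ is known to be a curved flat, the Remark following Proposition~\ref{prop:parflat} shows that $f_{\alpha_1}$, like any complement of $f_{\alpha_0}$ in $W$, is parallel for some representative of the class --- so using it as a step toward proving $W$ is a curved flat is circular without genuinely new input.

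The paper avoids this difficulty altogether, because Proposition~\ref{prop:parflat} does not require the parallel complement to be $f_{\alpha_1}$ or any Demoulin member: any $d+m\eta$-parallel null rank-$2$ complement of $f$ inside $W$ suffices. So the paper takes only three Darboux transforms $\hat{f}_{1},\hat{f}_{2},\hat{f}_{3}$ with lines $\hat{s}_{i}$ parallel for a single $d+m\eta$ (legitimate for any fixed representative: under a gauge change, $\exp(m\tau)\hat{\sigma}=\hat{\sigma}+m\tau\hat{\sigma}$ with $(\tau\hat{\sigma},\hat{\sigma})=0$ by skewness, so $\tau\hat{\sigma}\in s_{0}$ and the parallel line moves \emph{within} its own $\hat{f}_{i}$), and runs a rank dichotomy on $\compf:=\hat{s}_{1}+\hat{s}_{2}+\hat{s}_{3}$. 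If $\compf$ has rank $2$, Proposition~\ref{prop:parflat} applies at once. If it has rank $3$, then $s:=\compf\cap f$ satisfies $d\sigma=(d+m\eta)\sigma\in\Omega^{1}(\compf)\cap\Omega^{1}(f^{\perp})=\Omega^{1}(s)$ (using that $f\wedge f^{\perp}$ annihilates $f$, together with the Legendre condition), so $s$ is a constant line; by Remark~\ref{rem:inter} it equals $f\cap\hat{f}$ for some member $\hat{f}$ of the second family, whence $\hat{f}$, spanned by the two parallel lines $s$ and its Darboux line $\hat{s}$, is entirely $d+m\eta$-parallel, and $\hat{s}'\oplus\hat{s}_{1}$ with $\hat{s}'\le\hat{f}$, $\hat{s}'\perp\hat{s}_{1}$ is the required complement. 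I recommend you either adopt this weaker target and three-transform construction, or supply an actual proof of your simultaneous normalisation; as written, the converse half of your argument is a programme, not a proof.
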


\begin{proof}
Suppose that $W$ is a curved flat and fix $f, \tilde{f}\in\{f_{\alpha}\}$. By Lemma~\ref{lem:legA}, Lemma~\ref{lem:flatclosed} and Proposition~\ref{prop:flatpar}, we have that $f$ is Lie applicable with associated 1-form $\eta = - \frac{1}{m} \mathcal{N}_{\compf}$ for some $m\in \mathbb{R}\backslash\{0\}$. 
Fix $\hat{f}\in \{\hat{f}_{\beta}\}$. Then by Remark~\ref{rem:inter}, $\hat{s}:= \compf\cap \hat{f}$ is parallel for $\mathcal{D}^{W} = D$. Thus $\hat{s}$ is parallel for $d+m\eta = D + \mathcal{N}_{f}$. Hence $\hat{f}$ is an $m$-Darboux transform of $f$. 

Conversely, let $f\in \{f_{\alpha}\}$ be a regular Lie applicable Legendre map and let $\hat{f}_{1},\hat{f}_{2},\hat{f}_{3}\in\{\hat{f}_{\beta}\}$ be $m$-Darboux transforms of $f$. Hence there exist rank 1 subbundles $\hat{s}_{i}\le \hat{f}_{i}$ that are parallel for $d+m\eta$, for some gauge potential $\eta$ of $f$. Let $\compf:= \hat{s}_{1}+\hat{s}_{2} + \hat{s}_{3}$. If $\tilde{f}$ has rank 2 then Proposition~\ref{prop:parflat} implies that $W = f\oplus \tilde{f}$ is a curved flat. Otherwise, suppose that $\compf$ has rank 3 on some open subset $U$ of $\Sigma$. Without loss of generality, assume that $U=\Sigma$. Then $s:= \compf\cap f$ is a rank 1 subbundle of $f$. Since $\tilde{f}$ is parallel for $d+m\eta$ we have that $d\sigma = (d+m\eta)\sigma \in \Omega^{1}( \tilde{f})$ for any $\sigma \in \Gamma s$. On the other hand, since $f$ is a Legendre map, we have that $d\sigma \in \Omega^{1}(f^{\perp})$. Thus, $d\sigma \in \Omega^{1}(s)$ and $s$ is constant. It then follows that there exists $\hat{f}\in\{\hat{f}_{\beta}\}$ with $f\cap \hat{f} = s$. Now $\hat{f}$ is an $m$-Darboux transform of $f$ and thus there exists a $d+m\eta$ parallel subbundle $\hat{s}\le \hat{f}$ with $\hat{s}\cap f=\{0\}$. Since we already have that $s\le \hat{f}$ is $d+m\eta$ parallel, it follows that $\hat{f}$ is parallel for $d+m\eta$. Assuming that $\hat{f}\neq \hat{f}_{1}$ (otherwise apply an analogous argument with $\hat{s}_{2}\le \hat{f}_{2}$) we may now choose $\hat{s}'\le \hat{f}$ that is $d+m\eta$ parallel with $\hat{s}'\perp \hat{s}_{1}$. Setting $\tilde{f}'= \hat{s}'\oplus \hat{s}_{1}$ yields a rank 2 null subbundle of $W$ complementary to $f$ that is parallel for $d+m\eta$. Applying Proposition~\ref{prop:parflat} then yields the result. 
\end{proof}

\noindent\textit{Examples:}
\begin{enumerate}[leftmargin=*, label=\arabic*.]
\item If $W$ is a curved flat containing a constant timelike vector $\mathfrak{p}$ then $W^{\perp}$ is a curved flat in the space of circles (a hypercyclic system) of the conformal geometry of $\mathfrak{p}^{\perp}\cong \mathbb{R}^{n+2,1}$ . The Demoulin families of $W$ are then the contact lifts of the orthogonal submanifolds of this hypercyclic system. These are known to be conformally flat hypersurfaces that are mutually Eisenhart transforms with parameter $\mu=0$. See~\cite{B1968, BCwip,H1996}. 
\item In the case that $n=2$ and $W$ is a curved flat containing a constant lightlike vector $\mathfrak{q}$, $W\cap  \mathfrak{q}^{\perp}$ projects to a curved flat in the Laguerre geometry $\mathfrak{q}^{\perp}/\mathfrak{q}\cong \mathbb{R}^{3,1}$. The Demoulin families of $W$ are then contact lifts of $L$-isothermic surfaces that are mutually Bianchi-Darboux transforms of each other. Geometrically the curved flat represents the map into the space of point pairs of $S^{2}$ built from the Gauss maps of such Bianchi-Darboux transforms. See~\cite{MN2000}. 
\end{enumerate}

\bibliographystyle{plain}
\bibliography{bibliographycf}

\end{document}